\theoremstyle{plain}
\newtheorem{theorem}{Theorem}
\newtheorem{lemma}{Lemma}
\theoremstyle{definition}
\DeclareMathOperator{\im}{im}
\DeclareMathOperator{\SL}{SL}
\begin{document}

\title{Building Groups From Restricted Diagrams of Groups}
\author{Nic Brody and Michael R. Klug}
\maketitle

\begin{abstract}
	We consider the problem of realizing a group as the fundamental group of a graph of groups where the vertex groups are restricted to certain classes (for example, coming from a certain finite list of groups, or having bounded geometric rank).  We show how this places restrictions on the possible groups that can be realized and we give a topological application of our results to the problem of constructing manifolds from a finite set of ``building blocks".   
\end{abstract}

\section{Introduction}

We prove two results concerning which groups can be obtained as fundamental groups of graphs of groups, where the vertex groups are restricted to coming from certain specific classes.  In section \ref{sec:realizing}, we prove that given a finite set of possible vertex groups $\mathcal{C}$, there are always infinitely many groups that cannot be realized as the fundamental groups of graphs of groups with vertex groups belonging to $\mathcal{C}$.  We show that this holds also for diagrams of groups (graphs of groups with non-injective edge homomorphisms).  This provides a different answer to a topological question of Martelli that was previously answered using group-theoretic methods by Freedman, which we discuss at the end of section \ref{sec:realizing}.  We also make some comments on variations of this question and the difficulty in dimension 4.  In section \ref{sec:rank}, we prove that the maximal rank of a free abelian subgroup of the fundamental group of a graph of groups is at most one larger than the maximal rank of the free abelian subgroups of each of the vertex groups.  We mention how this result does not hold for diagrams of groups. 

\section*{Acknowledgments}

 We would like to thank Benjamin Steinberg for explaining Lemma \ref{lem:n.i.} to us.  Michael would also like to thank the Max Planck Institute for Mathematics in Bonn, where some of this work was carried out.

\section{Realizing groups as fundamental groups of graphs of groups}\label{sec:realizing}

An abstract graph $\Gamma$ is a pair of sets $V$ and $E$ together with an involution $\overline{\phantom{x}} : E \to E$ with no fixed points and a map $\partial_0 : E \to V$.  The elements of $V$ are called vertices and the elements of $E$ are called edges.  An abstract graph determines an ordinary graph which we also denote by $\Gamma$, and we say that an abstract graph $\Gamma$ is connected if its corresponding (ordinary) graph is connected.  A graph of groups $\mathcal{G}$ is an abstract graph $\Gamma$ together with an assignment of a group $G_v$ for every vertex $v \in V$, and an assignment of a group $G_e$ and an injective homomorphism $f_e : G_e \to G_{\partial_0 e}$ for every edge $e \in E$, with the condition that $G_e = G_{\overline{e}}$.  The groups $G_v$ are called the vertex groups, the groups $G_e$ are called the edge groups, and the maps $f_e$ are called the edge maps.  A \emph{diagram of groups} $\mathcal{G}$ is exactly the same as a graph of groups except that the edge maps are not assumed to be injective.  Graphs of groups arise when studying the action of a group on a tree (or trying to construct such an action) while more general diagrams of groups arise when glueing together several topological spaces and computing the fundamental group of the result in terms of the fundamental groups of the pieces. 

We will assume going forward that all of our graphs of groups and diagrams of groups are connected (i.e. that they have connected underlying abstract graphs).  We will also assume that the underlying graphs of all of our graphs of groups and diagrams of groups are finite.

A graph of spaces $\mathcal{X}$ is an abstract graph $\Gamma$, together with an assignment of a topological space $X_v$ to each vertex $v \in V$, and an assignment of a topological space $X_e$ and a continuous map $f_e: X_e \to X_{\partial_0 e}$ for each edge $e \in E$.  The spaces $X_v$ are called the vertex spaces, the spaces $X_e$ are called the edge spaces, and the maps $f_e$ are called the edge maps.  A \emph{based} graph of spaces is a graph of spaces where all of the spaces have a chosen base point and all of the edge maps preserve these base points.  Given a graph of spaces $\mathcal{X}$, the total space of $\mathcal{X}$ is the quotient of the disjoint union of $\{ X_v : v \in V\}$ and $\{X-e \times I : e \in E \}$ by the identifications
\begin{align*}
	X_e \times I &\to X_{\overline{e}} \times I \\
	(x,t)        &\mapsto (x, 1-t)
\end{align*}
and 
\begin{align*}
	X_e \times 0 &\to X_{\partial_0 e} \\
	(x,0)        &\mapsto f_e(x)
\end{align*}

A based graph of spaces is defined similarly, but where all of the vertex spaces $X_v$ and edge spaces $X_e$ are based and where all of the edge maps $f_e : X_e \to X_{\partial_0 e}$ are based.  The total space of a based graph of spaces is defined similarly. The basepoints yield a naturally embedded copy of the graph $\Gamma$, on which we may select a basepoint.  Given a graph of groups $\mathcal{G}$, a graph of spaces for $\mathcal{G}$ is a based graph of spaces $\mathcal{X}$ with the same underlying graph as $\mathcal{G}$, with identifications $G_v \cong \pi_1(X_v)$ and $G_e \cong \pi_1(X_e)$ (using the base-points of $X_v$ and $X_e$), such that using these identifications the edge maps of $\mathcal{X}$ induce the edge maps of $\mathcal{G}$. 

For a graph of groups $\mathcal{G}$, the fundamental group of $\mathcal{G}$, denoted $\pi_1(\mathcal{G})$, is the fundamental group of a graph of spaces $\mathcal{X}$ for $\mathcal{G}$ (see \cite{scott_wall} where it is shown that this does not depend (up to isomorphism) on the particular choice of $\mathcal{X})$).  Note that a purely algebraic definition of $\pi_1(\mathcal{G})$ (or rather, two such definitions) is given by Serre in $\cite{trees}$ and we will use some properties of this definition later.  That these two definitions agree is the content of \cite{althoen} (see also the remarks at the end of \cite{higgins}).  As is remarked in \cite{althoen}, all of these properties of $\pi_1(\mathcal{G})$ extend to diagrams of groups and are again all equivalent.  

\begin{lemma} \label{lem:n.i.}
	Let $\mathcal{C}$ be a class of groups that is closed under homomorphic images (i.e. if $\phi : H_1 \to H_2$ is a homomorphism and $H_1$ is a member of $\mathcal{C}$ then the image of $\phi$ is also a member of $\mathcal{C}$).  Then any group $G$ with $G \cong \pi_1(\mathcal{G})$ for some diagram of groups $\mathcal{G}$ with vertex groups belonging to $\mathcal{C}$, there is a graph of groups (with injective edge homomorphisms) $\mathcal{G'}$ with $G \cong \pi_1(\mathcal{G'})$ with the vertex groups of $\mathcal{G}$ contained in $\mathcal{C}$.  
\end{lemma}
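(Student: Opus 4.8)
The plan is to work with the explicit group presentation of $\pi_1(\mathcal{G})$ coming from Serre's description \cite{trees}, which remains valid verbatim for diagrams of groups (i.e. with non-injective edge maps). Fixing a spanning tree $T$ of the underlying graph $\Gamma$, the group $\pi_1(\mathcal{G})$ is generated by the elements of all the vertex groups $G_v$ together with symbols $t_e$ for $e \in E$, subject to the relations within each $G_v$, the relations $t_{\overline{e}} = t_e^{-1}$ and $t_e = 1$ for $e \in T$, and the edge relations $t_e f_e(g) t_e^{-1} = f_{\overline{e}}(g)$ for all $g \in G_e$. My strategy is to leave the graph $\Gamma$ unchanged and to replace each vertex group $G_v$ by its image $\overline{G}_v$ under the canonical map $G_v \to \pi_1(\mathcal{G})$, and each edge group by the image of its $f_e$-image; the resulting object $\mathcal{G}'$ will turn out to have injective edge maps and the same fundamental group. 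Throughout, write $\partial_1 e := \partial_0 \overline{e}$ for the terminal vertex of $e$.

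First I would record that $\overline{G}_v$, being the image of a homomorphism with domain $G_v \in \mathcal{C}$, lies in $\mathcal{C}$ by the hypothesis that $\mathcal{C}$ is closed under homomorphic images; this is the only place the hypothesis is used. Next comes the crucial observation that forces the edge maps to be injective. Writing $[\,\cdot\,]$ for the image of an element in $\pi_1(\mathcal{G})$, the edge relation gives $[f_{\overline{e}}(g)] = t_e [f_e(g)] t_e^{-1}$, so $[f_e(g)] = 1$ if and only if $[f_{\overline{e}}(g)] = 1$. Consequently conjugation by $t_e$ descends to a well-defined isomorphism $\theta_e$ from $\overline{G}_e^{(0)} := [f_e(G_e)] \leq \overline{G}_{\partial_0 e}$ onto $\overline{G}_e^{(1)} := [f_{\overline{e}}(G_e)] \leq \overline{G}_{\partial_1 e}$, sending $[f_e(g)]$ to $[f_{\overline{e}}(g)]$. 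I then define $\mathcal{G}'$ to have vertex groups $\overline{G}_v$, edge group $\overline{G}_e^{(0)}$ (used for both $e$ and $\overline{e}$), and edge maps given by the inclusion $\overline{G}_e^{(0)} \hookrightarrow \overline{G}_{\partial_0 e}$ together with $\theta_e$ composed with the inclusion into $\overline{G}_{\partial_1 e}$. Both edge maps are inclusions of subgroups, hence injective, so $\mathcal{G}'$ is a genuine graph of groups with vertex groups in $\mathcal{C}$.

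It remains to check $\pi_1(\mathcal{G}') \cong \pi_1(\mathcal{G})$, which I would establish by exhibiting mutually inverse homomorphisms on the two presentations. The map $\pi_1(\mathcal{G}) \to \pi_1(\mathcal{G}')$ sends $x \in G_v$ to its image in the vertex group $\overline{G}_v$ and sends $t_e \mapsto t_e$; the vertex and tree relations are immediate, and the edge relation of $\mathcal{G}$ maps precisely onto the edge relation of $\mathcal{G}'$ by the definition of $\theta_e$. Conversely, since each $\overline{G}_v$ is literally a subgroup of $\pi_1(\mathcal{G})$ and $\theta_e$ is realized there by conjugation by $t_e$, the assignment $\overline{G}_v \hookrightarrow \pi_1(\mathcal{G})$ and $t_e \mapsto t_e$ respects all relations of $\mathcal{G}'$. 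These two maps are visibly inverse on generators, giving the isomorphism. I expect the main obstacle to be purely organizational: keeping straight the three layers—$G_v$, its quotient $\overline{G}_v$, and $\overline{G}_v$ viewed as a subgroup of $\pi_1(\mathcal{G})$—and verifying that $\theta_e$ is genuinely well defined (rather than merely a relation-preserving assignment of generators), which is exactly where the equivalence $[f_e(g)] = 1 \Leftrightarrow [f_{\overline{e}}(g)] = 1$ does the essential work.
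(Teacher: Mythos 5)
Your proposal is correct, and it builds essentially the same object $\mathcal{G}'$ as the paper does (vertex groups replaced by their images in $\pi_1(\mathcal{G})$, edge groups by the images of $f_e(G_e)$), but the verification that $\pi_1(\mathcal{G}') \cong \pi_1(\mathcal{G})$ runs along a genuinely different route. The paper works topologically: it takes a graph of spaces $\mathcal{X}$ realizing $\mathcal{G}$, attaches $2$-disks to each vertex and edge space along the curves that die in $\pi_1$ of the total space, observes that the edge maps extend over these disks because a curve and its $f_e$-image are homotopic in the total space, and concludes since the new total space is the old one with $2$-cells attached along null-homotopic curves. You instead work directly with Serre's presentation (which, as the paper notes via Althoen, is valid for diagrams of groups), and exhibit explicit mutually inverse homomorphisms between the two presentations; the key point in both arguments is the same, namely that $[f_e(g)]$ and $[f_{\overline{e}}(g)]$ are conjugate in $\pi_1(\mathcal{G})$, so one vanishes iff the other does. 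Your version buys two things the paper glosses over: it makes explicit that \emph{both} edge maps of $\mathcal{G}'$ are injective (the inclusion and the conjugation isomorphism $\theta_e$ followed by an inclusion), and it repairs the bookkeeping issue that the paper's $G'_e$ and $G'_{\overline{e}}$, defined as images in the two endpoint groups, are conjugate but generally unequal subgroups of $\pi_1(\mathcal{G})$, which would violate the requirement $G_e = G_{\overline{e}}$; your choice of a single edge group $\overline{G}_e^{(0)}$ with $\theta_e$ on the far side fixes this. What the paper's approach buys is brevity and geometric transparency, at the cost of these details. One small remark: you do not really need to verify well-definedness of $\theta_e$ on representatives, since $\theta_e$ is just the restriction of the inner automorphism $x \mapsto t_e x t_e^{-1}$ of $\pi_1(\mathcal{G})$ to the subgroup $[f_e(G_e)]$, which is automatically an isomorphism onto its image $[f_{\overline{e}}(G_e)]$.
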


\begin{proof}
	Let $v_0$ be a vertex for the underlying graph of $\mathcal{G}$ and let $T$ be a choice of a maximal tree for the underlying graph of $\mathcal{G}$.  We will always assume that $\pi_1(\mathcal{G})$ is using the base-point $v_0$.  Then using $T$ we obtain maps $G_v \to \pi_1(\mathcal{G})$ and $G_e \to \pi_1(\mathcal{G})$.  We define $\mathcal{G}'$ to have the same underlying graph as $\mathcal{G}$ but with vertex groups $G_v' = \im(G_v \to \pi_1(\mathcal{G}))$, edge groups $G_e' = \im(G_e \to G_{\partial_0 e} \to \pi_1(\mathcal{G})$, and with edge homomorphisms the inclusions $G_v' \to G_{\partial_0 e}'$.  It remains to show that $\pi_1(\mathcal{G}') \cong \pi_1(\mathcal{G})$.  

	Let $\mathcal{X}$ be a graph of spaces for $\mathcal{G}$.  Then we can obtain a graph of spaces $\mathcal{X}'$ for $\mathcal{G}$ as follows.  For each vertex $v$, we take $X_v'$ to be the result of attaching 2-disks to each element of $\pi_1(X_v)$ that is trivial in $\pi_1(\mathcal{G})$, and similarly, we define $X_e'$.  We know that if a curve $\gamma$ in $X_v$ is null-homotopic in the total space of $\mathcal{X}$, then $f_e(\gamma)$ is also null-homotopic in  the total space of $\mathcal{X}$ (since $\gamma$ and $f_e(\gamma)$ are homotopic in the total space of $\mathcal{X}$).  So for the edge maps $f_e'$, we can just take $f_e'|_{X_v} = f_e$ and extend over the added disks to all of $X_e$ arbitrarily, which we can do by the preceding sentence.  Then the total space of $\mathcal{X}'$ has fundamental group $\pi_1(\mathcal{G}')$ and is the result of attaching 2-disks to the total space of $\mathcal{X}$ along null-homotopic curves.  Therefore, $\pi_1(\mathcal{G}') \cong \pi_1(\mathcal{G})$, as desired.   
\end{proof}

The following lemma is contained in the discussion following Lemma 7.4 in \cite{scott_wall}; we include a formal statement and proof for completeness.

\begin{lemma}\label{lem:contraction}
	Suppose that $\mathcal{G}$ is a graph of groups and $e$ is a non-loop edge in the underlying graph of $\mathcal{G}$ where the edge homomorphism $f_e$ is an isomorphism.  Let $\Gamma$ be the underlying graph of $\mathcal{G}$ and let $\Gamma/e$ be the graph obtained by contracting $e$.  Make $\Gamma/e$ into a graph of groups as follows.  Every vertex in $\Gamma/e$ that was not incident to $e$ in $\Gamma$ is given the same vertex graph as in $\mathcal{G}$ while the vertex in $\Gamma/e$ coming from contracting $e$ is labeled by $G_{\partial_1 e}$.  The edges in $\Gamma/e$ with both edges not incident to the vertex coming from $e$ or only incident to $\partial_1 e$ in $\Gamma$ are given the same edge group and edge maps as in $\mathcal{G}$.  The edges $e'$ in $\Gamma/e$ that are incident to $\partial_0 e$ in $\Gamma$ are given the same edge group and are labeled by the same edge maps if the initial vertex of $e'$ is not the vertex corresponding to $e$ and by $f_{\overline{e}}\circ f_e^{-1} \circ f_{e'}$ if the initial vertex of $e'$ is the vertex corresponding to $e$.  Denote this new graph of groups by $\mathcal{G}/e$.  Then $\pi_1(\mathcal{G}) \cong \pi_1(\mathcal{G}/e)$.  
\end{lemma}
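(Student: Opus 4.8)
The plan is to argue topologically, using a graph of spaces for $\mathcal{G}$ in which the edge map $f_e$ is realized by an actual homotopy equivalence, and then to collapse the part of the total space corresponding to $e$ and to the vertex $\partial_0 e$. First I would build a graph of spaces $\mathcal{X}$ for $\mathcal{G}$ in which every vertex space and edge space is aspherical, taking $X_v = K(G_v,1)$ and $X_e = K(G_e,1)$ and realizing each algebraic edge map by a based map into the aspherical target. Because $f_e : G_e \to G_{\partial_0 e}$ is an isomorphism and all the spaces are aspherical CW complexes, the corresponding map of spaces $f_e : X_e \to X_{\partial_0 e}$ induces an isomorphism on every homotopy group, hence is a homotopy equivalence by Whitehead's theorem.

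Next I would isolate the relevant piece of the total space $|\mathcal{X}|$. Writing $A = X_{\partial_0 e}$, $B = X_{\partial_1 e}$, and $E = X_e = X_{\overline{e}}$, the pair of half-edges $e,\overline{e}$ contributes the double mapping cylinder $A \cup_{f_e} (E \times I) \cup_{f_{\overline{e}}} B$, glued into $|\mathcal{X}|$ along $A$ and along $B$; in addition, any other edge $e'$ with $\partial_0 e' = \partial_0 e$ is attached to $A$ along the subspace $f_{e'}(X_{e'})$. The key geometric input is that the mapping cylinder of a homotopy equivalence deformation retracts onto its \emph{free} (domain) end, not merely onto its base: since $f_e$ is a homotopy equivalence and the inclusion of the free end $E \times \{\tfrac12\}$ into the half-cylinder $A \cup_{f_e}(E \times [0,\tfrac12])$ is a cofibration, this free end is a strong deformation retract of the half-cylinder. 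I would apply this deformation retraction and extend it over all of $|\mathcal{X}|$ using the homotopy extension property, thereby dragging each edge $e'$ attached at $A$ along to the free end $E \times \{\tfrac12\}$. What then remains near $e$ is $B \cup_{f_{\overline{e}}}(E \times [\tfrac12,1])$, the ordinary mapping cylinder of $f_{\overline{e}}$, which deformation retracts onto $B$ and carries the edges $e'$ into $B$.

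It remains to identify the resulting space with a total space for $\mathcal{G}/e$ and to read off the new edge maps. After the two retractions the vertices $\partial_0 e$ and $\partial_1 e$ have been merged into a single vertex with vertex space $B = X_{\partial_1 e}$, matching the prescription that the contracted vertex be labeled $G_{\partial_1 e}$; every vertex space and every edge not incident to $\partial_0 e$ is untouched; and an edge $e'$ formerly attached to $A$ by $f_{e'}$ is now attached to $B$ by the composite of $f_{e'}$, the retraction $A \to E$, and $f_{\overline{e}}$. The retraction $A \to E$ is a homotopy inverse of $f_e$, so on fundamental groups this composite is $f_{\overline{e}} \circ f_e^{-1} \circ f_{e'}$, exactly the edge map specified in the statement. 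Since every step is a homotopy equivalence of total spaces, we conclude $\pi_1(\mathcal{G}) \cong \pi_1(|\mathcal{X}|) \cong \pi_1(\mathcal{G}/e)$.

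The step I expect to be the main obstacle is the bookkeeping around the retraction onto the free end: one must check that the relevant inclusions are cofibrations so that the deformation retraction extends over the whole total space without disturbing the far vertices and edges, and one must track the attaching maps of the edges $e'$ through both retractions carefully enough to certify that they induce precisely $f_{\overline{e}}\circ f_e^{-1}\circ f_{e'}$ on $\pi_1$. The non-loop hypothesis enters exactly here, ensuring $\partial_0 e \neq \partial_1 e$ so that $A$ and $B$ are distinct vertex spaces and the contraction is well defined. As a sanity check one can give a purely algebraic proof: choosing a maximal tree $T$ containing $e$ and using Serre's presentation of $\pi_1(\mathcal{G},T)$ from \cite{trees}, the relation $t_e = 1$ together with $t_e f_e(g) t_e^{-1} = f_{\overline{e}}(g)$ forces $f_e(g) = f_{\overline{e}}(g)$ for all $g \in G_e$; since $f_e$ is onto, this is a Tietze transformation eliminating the generators of $G_{\partial_0 e}$, and the resulting presentation is visibly that of $\pi_1(\mathcal{G}/e)$.
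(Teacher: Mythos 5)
Your proof is correct and follows essentially the same route as the paper: both realize $\mathcal{G}$ by a graph of spaces and collapse the double mapping cylinder over $e$ onto the vertex space of $\partial_1 e$. The paper shortcuts your first retraction by building $\mathcal{X}$ so that $X_e = X_{\partial_0 e}$ and the spatial edge map $f_e$ is literally the identity, whereas you keep $f_e$ a general homotopy equivalence and retract its mapping cylinder onto the free end; your version (together with the algebraic sanity check) just carries out in more detail what the paper's two-line ``compress $X_e$ into $X_v$'' argument leaves implicit.
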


\begin{proof}
To get $\pi_1(\mathcal{G})$, we build an appropriate graph of spaces $\mathcal{X}$ and take the fundamental group of the total space of $\mathcal{X}$.  Let $v$ and $u$ be the two vertices incident to $e$ with $f_e : G_e \to G_u$ an isomorphism.  Then we can build $\mathcal{X}$ so that the vertex spaces $X_u$ and the edge space $X_e$ are equal and the edge map $f_e : X_e \to X_u$ that realizes the isomorphism $f_e$ is the identity.  Then by compressing $X_e$ into $X_v$, which is a homotopy equivalence and therefore does not change the fundamental group, we obtain a space with fundamental group equal to $\pi_1(\mathcal{G}/e)$.  
\end{proof}

The assumption that the underlying graph for all of our graphs of groups and diagrams of groups is finite is used in the arguments below when we apply Lemma \ref{lem:contraction} to a spanning tree in a graph of groups.  This lemma cannot be applied to an infinite spanning tree.  One example to keep in mind that illustrates this is the group of dyadic rationals which can be described as the fundamental group of a graph of groups where the underlying group is has vertices $v_i$ indexed by $1,2,3,...$ and edges $e_i$ joining $v_i$ and $v_{i+1}$.  Each vertex $v_i$ and edge $e_j$ is labeled with $\mathbb{Z}$, the edge maps $f_e : G_{e_i} \to G_{v_i}$ are isomorphisms, and the edge maps $f_{\overline{e}}: G_{e_i} \to G_{v_{i+1}}$ are given by multiplication by 2.  This description is also given in \cite{scott_wall}.

A group $G$ is said to have property FA if for any action of $G$ on any tree $X$, there exists a vertex of $X$ fixed by all of $G$ (see \cite{trees}).  A group $G$ is called splittable if $G \cong A *_C B$ where the maps $C \to A$ and $C \to B$ are both not isomorphisms or if $G \cong A *_C$.  Otherwise, $G$ is called unsplittable.  Groups with property FA are unsplittable (see \cite{scott_wall}).   

\begin{lemma} \label{lem:FA}
If $G$ is unsplittable and $\mathcal{G}$ is a graph of groups with $G \cong \pi_1(\mathcal{G})$, then one of the vertex groups of $\mathcal{G}$ is isomorphic to $G$.  
\end{lemma}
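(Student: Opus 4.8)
The plan is to induct on the number of (geometric) edges of the underlying graph $\Gamma$ of $\mathcal{G}$, repeatedly using unsplittability to collapse the graph of groups down to a single vertex via Lemma \ref{lem:contraction}. In the base case $\Gamma$ has no edges; since $\Gamma$ is connected it is a single vertex $v$, so $\pi_1(\mathcal{G}) \cong G_v$ and we are done. For the inductive step I would first argue that $\Gamma$ must be a tree: if not, choose an edge $e$ lying outside some spanning tree, so that deleting the pair $\{e,\overline{e}\}$ leaves $\Gamma$ connected. Standard Bass--Serre theory then expresses $G \cong \pi_1(\mathcal{G})$ as an HNN extension $\pi_1(\mathcal{G}'') *_{G_e}$, where $\mathcal{G}''$ is the graph of groups obtained by deleting $e$; by definition this makes $G$ splittable, contradicting the hypothesis. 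Hence $\Gamma$ is a tree.

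Assuming now that $\Gamma$ is a tree with at least one edge, I would pick any edge $e$ and delete it, disconnecting $\Gamma$ into the component $\Gamma_1$ containing $\partial_0 e$ and the component $\Gamma_2$ containing $\partial_1 e$. Writing $A = \pi_1(\mathcal{G}|_{\Gamma_1})$ and $B = \pi_1(\mathcal{G}|_{\Gamma_2})$ for the restricted graphs of groups, Bass--Serre theory gives $G \cong A *_{G_e} B$, where $G_e$ maps into $A$ by $G_e \xrightarrow{f_e} G_{\partial_0 e} \hookrightarrow A$ and into $B$ by $G_e \xrightarrow{f_{\overline{e}}} G_{\partial_1 e} \hookrightarrow B$; both composites are injective because vertex groups embed into the fundamental group of a graph of groups. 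Since $G$ is unsplittable, this amalgam cannot have both inclusions proper, so at least one of $G_e \hookrightarrow A$ and $G_e \hookrightarrow B$ is surjective. If, say, $G_e \hookrightarrow A$ is onto, then because it factors as $G_e \xrightarrow{f_e} G_{\partial_0 e} \hookrightarrow A$ through an injection, both $f_e$ and $G_{\partial_0 e} \hookrightarrow A$ are forced to be surjective, and hence (being already injective) isomorphisms; in particular $f_e \colon G_e \to G_{\partial_0 e}$ is an isomorphism. Symmetrically, $G_e \hookrightarrow B$ onto forces $f_{\overline{e}}$ to be an isomorphism.

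With an edge map now known to be an isomorphism, I would apply Lemma \ref{lem:contraction} to contract the corresponding non-loop edge (every edge of a tree is a non-loop), obtaining a graph of groups $\mathcal{G}/e$ with one fewer edge and $\pi_1(\mathcal{G}/e) \cong \pi_1(\mathcal{G}) \cong G$. The key bookkeeping point is that the vertex groups of $\mathcal{G}/e$ are all among the vertex groups of $\mathcal{G}$: the label of the contracted vertex is $G_{\partial_1 e}$ or $G_{\partial_0 e}$ according to which edge map was the isomorphism, and every other vertex group is unchanged. Consequently any vertex group of $\mathcal{G}/e$ isomorphic to $G$ is also a vertex group of $\mathcal{G}$. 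Applying the inductive hypothesis to $\mathcal{G}/e$, whose fundamental group is still the unsplittable group $G$, then finishes the argument.

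I expect the main obstacle to be the careful handling of the two Bass--Serre decompositions — verifying that a non-tree edge yields a genuine HNN splitting and that a tree edge yields the amalgam $A *_{G_e} B$ with \emph{injective} edge inclusions — together with the deduction that an edge inclusion being surjective forces the corresponding edge map $f_e$ itself to be an isomorphism, which is precisely what licenses the application of Lemma \ref{lem:contraction}. The embedding of vertex groups into $\pi_1(\mathcal{G})$ is the standard fact underlying both decompositions and is what makes the surjectivity-to-isomorphism deduction go through.
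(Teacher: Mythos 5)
Your proof is correct and follows essentially the same route as the paper's: both use the HNN and amalgamated-product splittings coming from Bass--Serre theory to show that unsplittability forces some edge homomorphism to be an isomorphism, then contract that edge via Lemma \ref{lem:contraction} and induct. The only difference is organizational --- you induct on the number of edges and rule out all non-tree edges up front, whereas the paper inducts on the number of vertices and disposes of surplus edges one vertex at a time.
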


\begin{proof}
	We proceed by induction on the number of vertices of the underlying graph of $\mathcal{G}$.  For a one-vertex graph, if there are no edges, the result is immediate.  If there are edges, then by choosing one of the edges $e$, we have $G \cong \pi_1(\mathcal{G}-e) *_{G_e}$.  So assume that the underlying graph of $\mathcal{G}$ has more than one vertex and let $v$ be a vertex.  If there is more than one edge from $v$ to the other vertices of $\mathcal{G}$, than by choosing one of the edges $e$, we again have $G \cong \pi_1(\mathcal{G}-e) *_{G_e}$.  Therefore, we can assume that there is only one edge $e$ from $v$ to the other vertices of $\mathcal{G}$.  Letting $\mathcal{G}_v$ denote the graph of groups of the subgraph containing just the vertex $v$ and all of the loops at $v$ in $\mathcal{G}$, we have $G \cong \pi_1(\mathcal{G}_v) *_{G_e} \pi_1(\mathcal{G}-v)$ and therefore, one of the edge homomorphisms of $e$ is an isomorphism.  But then using Lemma \ref{lem:contraction} allows us to contract $e$ and obtain a new graph of groups $\mathcal{G}/e)$ with one fewer vertex than $\mathcal{G}$ and the set of vertex groups of $\mathcal{G}/e)$ are a subset of the vertex groups of $\pi_1(\mathcal{G})$.  The result then follows by induction.  
\end{proof}

The rank of a group is the minimal number of elements needed to generate the group.  

\begin{theorem}\label{thm:glue}
Let $\mathcal{C}$ be a finite set of groups.  Then there are infinitely many finitely presented groups that are not isomorphic to $\pi_1(\mathcal{G})$ for some diagram of groups $\mathcal{G}$ whose vertex groups all belong to $\mathcal{C}$.  
\end{theorem}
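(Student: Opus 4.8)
The plan is to combine Lemmas \ref{lem:n.i.} and \ref{lem:FA} into a single structural constraint on unsplittable realizable groups, and then defeat that constraint with a rank count. First I would enlarge $\mathcal{C}$ to the class $\overline{\mathcal{C}}$ consisting of all homomorphic images of members of $\mathcal{C}$. Since a homomorphic image of a homomorphic image is again a homomorphic image, $\overline{\mathcal{C}}$ is closed under homomorphic images, so Lemma \ref{lem:n.i.} applies to it. Hence if $G \cong \pi_1(\mathcal{G})$ for a diagram of groups $\mathcal{G}$ with vertex groups in $\mathcal{C} \subseteq \overline{\mathcal{C}}$, there is a graph of groups $\mathcal{G}'$ (with injective edge homomorphisms) realizing $G$ whose vertex groups all lie in $\overline{\mathcal{C}}$.

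Next I would bring in unsplittability. Assume in addition that $G$ is unsplittable. Applying Lemma \ref{lem:FA} to the graph of groups $\mathcal{G}'$, one of its vertex groups is isomorphic to $G$; as that vertex group lies in $\overline{\mathcal{C}}$, the group $G$ is itself a homomorphic image of some member of $\mathcal{C}$. This is the crucial constraint: \emph{every unsplittable group realizable from $\mathcal{C}$ as a diagram of groups is a quotient of one of the finitely many groups in $\mathcal{C}$}.

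Now I would convert this into a rank bound. Set $r = \max_{H \in \mathcal{C}} \operatorname{rank}(H)$, which is finite since $\mathcal{C}$ is a finite set of finitely generated groups. A homomorphic image of a group generated by at most $r$ elements is generated by the images of those elements, hence also has rank at most $r$. Therefore every unsplittable group realizable from $\mathcal{C}$ has rank at most $r$. Taking the contrapositive, it suffices to exhibit infinitely many pairwise non-isomorphic finitely presented unsplittable groups of rank strictly greater than $r$.

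Finally I would produce such a family. The cleanest choice is $G_n = (\mathbb{Z}/2)^n$ for $n > r$: each is finite, hence finitely presented; each has rank exactly $n > r$; and they are pairwise non-isomorphic. Each $G_n$ is unsplittable because any nontrivial amalgam $A *_C B$ (both inclusions proper) and any HNN extension $A *_C$ is infinite, whereas $G_n$ is finite. By the constraint above, no $G_n$ is isomorphic to $\pi_1(\mathcal{G})$ for any diagram $\mathcal{G}$ with vertex groups in $\mathcal{C}$, which proves the theorem. The one genuinely delicate point is the step requiring $r < \infty$: this is exactly where finite generation of the members of $\mathcal{C}$ enters, and it cannot be dropped, since $F_\infty$ surjects onto every finitely presented group and a two-vertex pushout diagram with both vertex groups $F_\infty$ realizes any of them as its fundamental group. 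Arranging unsplittability and unbounded rank simultaneously is the main obstacle in principle, but finite groups resolve it at once.
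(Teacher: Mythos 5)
Your proposal is correct and follows the same skeleton as the paper's proof: pass to a class containing $\mathcal{C}$ that is closed under homomorphic images so that Lemma~\ref{lem:n.i.} converts diagrams of groups into genuine graphs of groups, apply Lemma~\ref{lem:FA} to force any realizable unsplittable group to be a quotient of a member of $\mathcal{C}$, deduce a uniform rank bound $r$, and exhibit finitely presented unsplittable groups of unbounded rank. (Whether you close up $\mathcal{C}$ to $\overline{\mathcal{C}}$ or, as the paper does, embed it in the class $\mathcal{C}_N$ of all groups of rank at most $N$ is immaterial.) The one substantive difference is your choice of witnesses: the paper takes $G_m = \SL_3(\mathbb{Z}) \times (\mathbb{Z}/2\mathbb{Z})^m$ and derives unsplittability from property FA for $\SL_3(\mathbb{Z})$ together with the fact that property FA passes from a finite-index subgroup to the ambient group, whereas you use the finite groups $(\mathbb{Z}/2\mathbb{Z})^n$ directly and verify unsplittability by the elementary observation that every amalgam over a proper subgroup of both factors, and every HNN extension, is infinite. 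Your route is leaner --- it avoids Serre's property FA machinery entirely --- at the cost of producing only finite examples, while the paper's construction shows the phenomenon persists among infinite groups. Your closing caveat that $r < \infty$ is genuinely needed is also well taken: the paper's proof makes the same implicit assumption when it sets $N$ equal to the maximum of the ranks of the members of $\mathcal{C}$, and your two-vertex pushout with vertex groups $F_\infty$ shows that the theorem fails without it, so the hypothesis should be read as ``finite set of finitely generated groups.''
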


\begin{proof}
	Let $G$ be a finitely presented unsplittable group - for example $\SL_3(\mathbb{Z})$ which has property FA and is therefore unsplittable (see \cite{trees}).  Note that if a finite index subgroup of a group has property FA, then the whole group has property FA (see 6.3.4 in \cite{trees}).  Therefore, the group $G_m = G \times H_m$ where $H_m$ is a finite group with rank greater than or equal to $m$ (for example, the direct product of $n$ copies of $\mathbb{Z}/2\mathbb{Z}$) is unsplittable and has rank at least $m$.   

	Now let $\mathcal{C}_n$ denote the class of all groups that have rank less than or equal to $n$.  This class is closed under homomorphic images since if a group $G$ surjects a group $H$, the rank of $G$ is greater than or equal to the rank of $H$.  The set $\mathcal{C}$ is contained in the class $\mathcal{C}_N$ for some $N$ sufficiently large (namely, taking $N$ to be the maximum rank of all of the ranks of the groups in $\mathcal{C}$. 

	By Lemma \ref{lem:FA}, for $m > n$, the groups $G_m$ cannot be realized as $\pi_1(\mathcal{G})$ for $\mathcal{G}$ a graph of groups with all vertex groups contained in $\mathcal{C}_n$.  Note that, by Lemma \ref{lem:n.i.}, the collection of groups that are isomorphic to $\pi_1(\mathcal{G})$ for some diagram of groups $\mathcal{G}$ with vertex groups in $\mathcal{C}_n$ is equal to the collection of groups that are isomorphic to $\pi_1(\mathcal{G})$ for some graph of groups (with injective edge homomorphisms) $\mathcal{G}$ with vertex groups in $\mathcal{C}_n$.  Then the groups $G_m$ for $m > n$ cannot be isomorphic to $\pi_1(\mathcal{G})$ for any diagram of groups with vertices in $\mathcal{C}$ and the result follows.  
\end{proof}

Martelli asked (\cite{mo_question}) if for $n > 2$ there are could be a finite set of smooth compact $n$-manifolds $\{M_1,...,M_k\}$ such that every closed smooth $n$-manifold can be obtained by gluing together some number of copies of the manifolds $M_i$ with arbitrary diffeomorphisms between boundary components (where different boundary components of a fixed copy of one of the $M_i$ can be glued to different $M_j$ and to each other).  This is equivalent to the question of realizing every compact smooth $n$-manifold as the total space of a graph of spaces where there are a finite number of possible smooth $n$-manifolds for the vertex spaces, and the edge maps are all diffeomorphisms.  For $n =1$ this can be done using the set consisting of just an interval, and for $n$ this can be done by taking a set consisting of a pair of pants, a disk, and a M\"obius band.  Agol shows that no such finite set exists for $n=3$ (see the discussion at \cite{mo_question}) and Freedman introduced the concept of group width in \cite{freedman_width} and proved that no such finite set of manifolds can exist if $n \geq 4$.  Since every finitely presentable group is the fundamental group of some compact smooth $n$-manifold for every $n \geq 4$ (since every 2-complex can be embedded in $\mathbb{R}^5$), Theorem \ref{thm:glue} gives an alternative proof of Freedman's result that there is no such finite set of manifolds for $n \geq 4$.  

As mentioned by Freedman in \cite{freedman_width}, the question of Martelli in the case of simply-connected $n$-manifolds is to our knowledge open for $n \geq 4$.  In the case of $n=4$, this might be especially interesting.  Donaldson proved \cite{donaldson83} that every simply-connected closed smooth 4-manifold with definite intersection form has a diagonalizable intersection form.  Freedman proved \cite{freedman82} that two simply-connected closed smooth 4-manifolds are homeomorphic if and only if they have isomorphic intersection forms.  Indefinite symmetric bilinear form on finitely generated free abelian groups are classified up to isomorphism by rank, signature, and type (even or odd) (\cite{milnor_husemoller}).  The signature of a symmetric bilinear form on a finitely generated free abelian group is divisible by 8 (\cite{milnor_husemoller}).

Putting this together it follows that every smooth simply-connected closed 4-manifold has intersection form isomorphic to either $k[1] + l[-1]$ or $kH + lE_8$ where $H$ is the hyperbolic intersection form and $E_8$ is the intersection form of Freedman's $E_8$ manifold (which we also denote by $E_8$) \cite{freedman82}.  Then by Freedman's work, every smooth simply-connected closed 4-manifold is homeomorphic to a manifold obtained by glueing together copies of $B^4$, $\mathbb{C}P^2 - 2B^4$, $\overline{\mathbb{C}P}^2 - 2B^4$, $E_8 - 2B^4$, and $S^2 \times S^2 - 2B^4$ (where $X^4- 2B^4$ denotes the result of removing two disjoint 4-balls from a 4-manifold $X$).  Thus in the ``mixed case'' where we are allowed topological manifolds $M_i$, we can realise all smooth closed simply-connected 4-manifolds.  Martelli asked specifically about this simply-connected case with smooth manifolds $M_i$ is his original question (see \cite{mo_question}) and Freedman remarks that this case ``looks difficult'' (see \cite{freedman_width}).

\section{Geometric ranks of fundamental groups of graphs of groups}\label{sec:rank}

The \emph{geometric rank} of a group $G$ is the largest $n$ such that $\mathbb{Z}^n$ embeds in $G$.  In this section, we prove a result bounding the geometric rank of the fundamental group of a finite graph of groups in terms of the geometric ranks of the vertex groups.

The following lemma follows from the results in section 3 of \cite{scott_wall} (see the remark following Theorem 3.7 and the discussion following Subgroup Theorem 3.14):

\begin{lemma} \label{lem:subgroups}
Suppose that $\mathcal{G}$ is a graph of groups and $H$ is a subgroup of $\pi_1(\mathcal{G})$.  Then $H \cong \pi_1(\mathcal{H})$ for some graph of groups $\mathcal{H}$ where the vertex groups of $\mathcal{H}$ are subgroups of the vertex groups of $\mathcal{G}$, and where the edge groups of $\mathcal{H}$ are subgroups of the edge groups of $\mathcal{G}$.  
\end{lemma}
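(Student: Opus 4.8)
The plan is to argue topologically, staying inside the graph-of-spaces framework already set up, and to realize $\mathcal{H}$ by passing to the covering space corresponding to $H$. Concretely, let $\mathcal{X}$ be a graph of spaces for $\mathcal{G}$ with total space $X$, so that $\pi_1(X) \cong \pi_1(\mathcal{G}) = G$ and, inside $X$, each vertex space $X_v$ is $\pi_1$-injective with $\pi_1(X_v) \cong G_v$ and each edge cylinder $X_e \times I$ is $\pi_1$-injective with $\pi_1(X_e) \cong G_e$ (the latter injectivity being exactly the content of the edge maps $f_e$ being injective). Let $p : \tilde{X} \to X$ be the connected covering corresponding to the subgroup $H \leq G \cong \pi_1(X)$, so that $\pi_1(\tilde{X}) \cong H$.

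Next I would transport the decomposition of $X$ up to $\tilde{X}$ through $p$. The space $X$ is assembled from the vertex spaces $X_v$ and the edge cylinders $X_e \times I$ according to the incidence pattern of $\Gamma$; since $p$ is a covering, $\tilde{X}$ is assembled in the same local pattern from the connected components of the $p^{-1}(X_v)$ together with the lifted edge cylinders, each of which is a product of a connected cover of $X_e$ with $I$. In other words, $\tilde{X}$ is itself the total space of a graph of spaces $\tilde{\mathcal{X}}$ whose vertex spaces are the components of the $p^{-1}(X_v)$, whose edge spaces are the corresponding covers of the $X_e$, and whose gluing maps are the restrictions of $p$ and of the $f_e$; its underlying graph $\tilde{\Gamma}$ records which lifted edge cylinders abut which vertex-space components, and is connected because $\tilde{X}$ is. Each component of $p^{-1}(X_v)$ is a connected covering of $X_v$, so its fundamental group injects into $\pi_1(X_v) \cong G_v$, and likewise each edge-space component has fundamental group a subgroup of $G_e$. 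Reading off fundamental groups, $\tilde{\mathcal{X}}$ is a graph of spaces for a graph of groups $\mathcal{H}$ with $\pi_1(\mathcal{H}) \cong \pi_1(\tilde{X}) \cong H$, whose vertex and edge groups are the subgroups of the $G_v$ and $G_e$ just identified.

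The step I expect to require the most care is verifying that $\tilde{\mathcal{X}}$ really satisfies the axioms of a graph of spaces, and in particular that its edge maps remain $\pi_1$-injective, so that $\mathcal{H}$ is a graph of groups with injective edge homomorphisms. This is inherited from $\mathcal{X}$ by a component-by-component diagram chase: in the commuting square of covering maps over a fixed edge map $f_e$, the two composites into $\pi_1(X_{\partial_0 e})$ agree, and the route through $\pi_1(X_e)$ is injective (covering maps are $\pi_1$-injective and $f_{e*}$ is injective), so the lifted edge map must already be injective on $\pi_1$. A second point worth flagging, given the standing conventions of the paper, is finiteness: the graph $\tilde{\Gamma}$ need not be finite when $H$ has infinite index, so the $\mathcal{H}$ produced here may have an infinite underlying graph, and for the applications one either permits this or restricts to finite-index $H$. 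Finally, the same result can be obtained purely algebraically from Serre's correspondence in \cite{trees}: the group $G$ acts without inversion on its Bass--Serre tree $\mathcal{T}$ with quotient $\Gamma$ and with vertex (edge) stabilizers the conjugates of the $G_v$ (respectively $G_e$); restricting this action to $H$ is again an action without inversion, and the structure theorem applied to the $H$-action on $\mathcal{T}$ yields $\mathcal{H}$ with underlying graph $\mathcal{T}/H$ and vertex (edge) groups the stabilizers $H \cap g G_v g^{-1}$, each of which is conjugate, hence isomorphic, to the subgroup $g^{-1} H g \cap G_v \leq G_v$.
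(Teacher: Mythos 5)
Your proposal is correct, and it is essentially the argument the paper relies on: the paper does not prove this lemma itself but cites Section 3 of \cite{scott_wall}, whose content is exactly your covering-space argument (pull the graph-of-spaces decomposition of the total space back to the cover corresponding to $H$ and read off the induced graph of groups), with your Bass--Serre remark being the algebraic formulation of the same correspondence. Your caveat about the underlying graph of $\mathcal{H}$ possibly being infinite when $H$ has infinite index is a genuine and worthwhile observation, since the paper's standing finiteness convention is silently dropped at this point.
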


\begin{theorem} \label{thm:g.rank}
	Let $\mathcal{G}$ be a graph of groups such that all of the vertex groups of $\mathcal{G}$ have geometric rank less than or equal to $n$.  Then the geometric rank of $\pi_1(\mathcal{G})$ is less than or equal to $n+1$.  
\end{theorem}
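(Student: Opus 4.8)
The plan is to show directly that every free abelian subgroup $A \cong \mathbb{Z}^m$ of $\pi_1(\mathcal{G})$ satisfies $m \le n+1$, which is exactly the assertion that the geometric rank of $\pi_1(\mathcal{G})$ is at most $n+1$. The main tool will be the action of $\pi_1(\mathcal{G})$ on its Bass--Serre tree $T$ (Serre's construction, \cite{trees}), for which the vertex stabilizers are precisely the conjugates of the vertex groups of $\mathcal{G}$; in particular every vertex stabilizer has geometric rank at most $n$. I would restrict this action to $A$ and analyze how an abelian group can act on a tree. (Alternatively one could first use Lemma \ref{lem:subgroups} to produce a graph of groups $\mathcal{H}$ with $\pi_1(\mathcal{H}) \cong A$ and whose vertex groups have geometric rank at most $n$, and then run the same argument on the Bass--Serre tree of $\mathcal{H}$; the two routes are interchangeable, but working inside $T$ avoids having to discuss $\mathcal{H}$.)

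The argument then splits according to the standard elliptic/hyperbolic dichotomy for isometries of a tree. First I would treat the case in which every element of $A$ is elliptic, i.e. fixes a point of $T$. Since $A = \mathbb{Z}^m$ is finitely generated and every element -- in particular every generator and every product of two generators -- is elliptic, Serre's fixed-point criterion (\cite{trees}) shows that $A$ fixes a vertex of $T$. Hence $A$ is contained in a single vertex stabilizer, so $m \le n$ and this case is settled.

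The substantive case is when some $a \in A$ is hyperbolic, with axis $L \subseteq T$. Here I would exploit that $A$ is abelian: every element of $A$ commutes with $a$ and so lies in the centralizer $C(a)$, which preserves the axis $L$. This gives a homomorphism $\phi \colon A \to \mathrm{Isom}(L)$ with $L \cong \mathbb{R}$. Because $\phi(a)$ is a nontrivial translation and $\phi(A)$ is abelian, $\phi(A)$ can contain no reflection -- a reflection and a nontrivial translation never commute -- so $\phi(A)$ consists entirely of translations; as all translation lengths along a simplicial axis are integers, $\phi(A)$ is a nonzero subgroup of $\mathbb{Z}$, whence $\phi(A) \cong \mathbb{Z}$. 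The kernel $K = \ker \phi$ fixes $L$ pointwise, so $K$ lies in a vertex stabilizer and therefore has geometric rank at most $n$; being a subgroup of $\mathbb{Z}^m$ it is free abelian of some rank $m' \le n$. Finally, the short exact sequence $1 \to K \to A \to \phi(A) \to 1$ with $\phi(A) \cong \mathbb{Z}$ free splits, giving $A \cong K \oplus \mathbb{Z}$ and thus $m = m' + 1 \le n+1$.

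I expect the crux to be the hyperbolic case, specifically verifying that the abelian image $\phi(A) \le \mathrm{Isom}(\mathbb{R})$ is a translation group isomorphic to $\mathbb{Z}$ and that the kernel is captured by a single vertex stabilizer; the two facts I would lean on most heavily -- that the centralizer of a hyperbolic tree isometry preserves its axis, and Serre's criterion for a global fixed point -- are standard, but they are what yield the clean splitting $A \cong K \oplus \mathbb{Z}$. The elliptic case and the rank bookkeeping are routine once these are in hand.
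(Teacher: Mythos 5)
Your argument is correct, and it takes a genuinely different route from the paper's. The paper first invokes Lemma \ref{lem:subgroups} to replace $\mathcal{G}$ by a graph of groups $\mathcal{A}$ for the maximal free abelian subgroup $A$ itself, with free abelian vertex groups, and then uses the normal form theorem for $\pi_1$ of a graph of groups (Theorem 11 of \cite{trees}) to force the combinatorics: every edge of a spanning tree must have an edge map that is an isomorphism (else a non-commuting pair appears), the tree is contracted via Lemma \ref{lem:contraction}, at most one loop can remain, both of its edge maps must be the same isomorphism, and one concludes $A \cong G_v \times \mathbb{Z}$ directly. Your proof instead restricts the Bass--Serre action to $A$ and runs the classical elliptic/hyperbolic dichotomy: either $A$ is finitely generated and entirely elliptic, hence fixes a vertex by Serre's criterion and has rank at most $n$, or a hyperbolic element forces $A$ to preserve an axis, yielding the splitting $A \cong K \oplus \mathbb{Z}$ with $K$ elliptic. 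The two approaches extract the same structural conclusion (a maximal free abelian subgroup is an elliptic free abelian group times at most one $\mathbb{Z}$), but yours buys something concrete: it never manipulates the induced graph of groups $\mathcal{A}$, which may have an \emph{infinite} underlying graph even when $\mathcal{G}$ is finite (the paper's own dyadic-rationals example arises exactly this way, as the induced decomposition of an infinite-index subgroup of a one-vertex graph of groups), so the step in the paper that contracts a spanning tree of $\mathcal{A}$ via Lemma \ref{lem:contraction} needs extra care that your tree argument avoids entirely. The paper's approach, in exchange, stays elementary in the sense of using only normal forms and no geometry of the tree, and it exhibits the abelian subgroup explicitly as $G_v' \times \mathbb{Z}$ inside a concrete one-edge splitting. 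All the facts you lean on --- Serre's fixed-point criterion for finitely generated groups with all generators and pairwise products elliptic, preservation of the axis by the centralizer of a hyperbolic element, integrality of translation lengths, and the incompatibility of a reflection with a nontrivial translation in an abelian subgroup of $\mathrm{Isom}(\mathbb{R})$ --- are standard and correctly applied, and the final bookkeeping $m = m' + 1 \le n+1$ is right.
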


\begin{proof}
	Let $A$ be a free abelian subgroup of $\pi_1(\mathcal{G})$ with maximal rank.  Then, by Lemma \ref{lem:subgroups}, $A = \pi_1(\mathcal{A})$ for some graph of groups $\mathcal{A}$ where all of the vertex groups in $\mathcal{A}$ are subgroups of the vertex groups of $\mathcal{G}$.  Since all of the vertex groups of a graph of groups are embedded in the fundamental group of the graph of groups, every vertex group of $\mathcal{A}$ is a free abelian group.  It then suffices to prove the theorem where $\mathcal{G}$ is a graph of groups with every vertex group being a finitely generated free abelian group.  We assume this from now on.  

	Let $e$ be a non-loop edge in the underlying graph of $\mathcal{G}$ with incident vertices $u$ and $v$.  We now show that one of the edge homomorphisms $f_e$ or $f_{\overline{e}}$ must be an isomorphism.  Assume for sake of contradiction that both $f_e : G_e \to G_u$ and $f_{\overline{e}}: G_{\overline{e}} \to G_v$ are not isomorphisms, and therefore, since they are both injective, $f_e$ and $f_{\overline{e}}$ are both not surjective.  Let $g_u \in G_u$ be an element not in the image of $f_e$ and $g_v \in G_v$ be an element not in the image of $f_{\overline{e}}$.  Take the base-point for $\mathcal{G}$ to be $u$.  Then we have the elements $g_u \in \pi_1(\mathcal{G}, u)$ and $e g_v e^{-1} \in \pi_1(\mathcal{G},u)$ and by Theorem 11 in \cite{trees} we have 
$$
	[g_u, e g_v e^{-1}] \neq 1	
$$
	where $[g,h] = ghg^{-1}h^{-1}$ is the commutator of $g$ and $h$.  But $\pi_1(\mathcal{G})$ was assumed to be abelian, and thus we have the desired contradiction.  Thus we may assume (by replacing $e$ with $\overline{e}$ if necessary), that $f_e$ is an isomorphism.  

	Now by choosing a maximal tree $T$ in the underlying graph of $\mathcal{G}$, we may contract $T$ to a single vertex using Lemma \ref{lem:contraction} since by the argument in the previous paragraph, each edge of $T$ has at least one edge homomorphism that is an isomorphism.  Thus we now assume that the underlying graph of $\mathcal{G}$ is a one vertex graph.  

	We now argue that $\mathcal{G}$ can have only one edge.  For if the underlying graph of $\mathcal{G}$ had two distinct edges $e_1$ and $e_2$ then we have the corresponding elements $e_1, e_2 \in \pi_1(\mathcal{G})$ and again by Theorem 11 in \cite{trees}, 
$$
	[e_1,e_2] \neq 1
$$
	thus contradicting the assumption that $\pi_1(\mathcal{G})$ is abelian. Thus we now assume that the underlying graph of $\mathcal{G}$ has exactly one vertex and exactly one edge (if there were no edge, then the result follows immediately).   Therefore, we are the case where $\pi_1(\mathcal{G})$ is an HNN-extension of the vertex group of $\mathcal{G}$.  

	We now argue that both of the edge morphisms of $\mathcal{G}$ must be isomorphisms.  Let $\alpha_1,\alpha_2 : G_e \to G_v$ be the two edge homomorphsims of $\mathcal{G}$.  Suppose for sake of contradiction that there exists an element $a \in G_v$ that is not in the image of $\alpha_1$.  Then, again using Theorem 11 of \cite{trees}, we have 
$$
aea^{-1}e^{-1} \neq 1
$$
in $\pi_1(\mathcal{G})$ which contradicts $\pi_1(\mathcal{G})$ being abelian.  Therefore, $\alpha_1$ is an isomorphism and similarly, $\alpha_2$ is an isomorphism.  

We now argue that both the edge isomorphisms $\alpha_1$ and $\alpha_2$ are in fact the same isomorphism.  Note that in $\pi_1(\mathcal{G})$ we have the relation
$$
e \alpha_1(c) = \alpha_2(c) e
$$
for all $c \in G_e$.  For sake of contradiction, assume that $\alpha_1(c) \neq \alpha_2(c)$ for some $c \in G_e$.  We have 
\begin{align*}
	\alpha_1(c) e \alpha_1(c)^{-1} e^{-1} &= \alpha_1(c) e e^{-1} \alpha_2(c)^{-1} \\
					      &= \alpha_1(c) \alpha_2(c)^{-1} \\
					      &\neq 1
\end{align*}
in $\pi_1(\mathcal{G})$, again by Theorem 11 in \cite{trees}.   

	So now we have the case where the underlying graph of $\mathcal{G}$ is a single vertex together with a single edge, and both of the edge maps are the same isomorphism.  Then, if $v$ is the vertex of $\mathcal{G}$ and $X_v$ is a connected space with $\pi_1(X) \cong G_v$, we can construct a graph of spaces $\mathcal{X}$ for $\mathcal{G}$ by also taking $X_e = X_v$ and taking both of the edge homomorphisms to be the identity.  Therefore, the resulting total space is $X_v \times S^1$ and so we have
	$$
	\pi_1(\mathcal{G}) \cong G_v \times \mathbb{Z}
	$$
and the result follows.  

\end{proof}

Theorem \ref{thm:g.rank} does not hold for diagrams of groups.  To see this, consider for example the genus 3 Heegaard splitting of $T^3$.  Considering this as the total space of a graph of spaces, we have two handlebodies glued along a surface of genus 3.  Note that the fundamental group of a handlebody is free and thus the two vertex groups have geometric rank $1$, however, $\pi_1(T^3) \cong \mathbb{Z}^3$.

\bibliography{graph_of_groups}
\bibliographystyle{alpha}

\end{document}